\newtheorem{theorem}{Theorem}
\newtheorem{corollary}{Corollary}
\newtheorem{assumption}{Assumption}
\theoremstyle{definition}
\newtheorem{example}{Example}
\patchcmd{\@settitle}{\uppercasenonmath\@title}{\scshape\large}{}{}
\patchcmd{\@setauthors}{\MakeUppercase}{\scshape\normalsize}{}{}
\newcommand{\ie}{i.e.,\xspace}  
\newcommand{\eg}{e.g.,\xspace}  
\newcommand{\st}{\text{s.t.}\xspace}
\newcommand{\abs}[1]{\mathchoice{\left\lvert #1 \right\rvert}%
{\lvert#1\rvert}%
{\lvert#1\rvert}%
{\lvert#1\rvert}%
}
\renewcommand{\:}{\colon}
\newcommand{\Yt}{\ensuremath{Y_t}}
\newcommand{\Ut}{\ensuremath{U_t}}
\newcommand{\Vt}{\ensuremath{V_t}}
\newcommand{\Zt}{\ensuremath{Z_t}}
\newcommand{\Wt}{\ensuremath{W_t}}
\newcommand{\RR}{\ensuremath{\mathbb{R}}}
\newcommand{\NN}{\ensuremath{\mathbb{N}}}
\DeclareMathOperator{\Vp}{\raisebox{-0.5em}{\ensuremath{+}}\kern-1.0ex\bigvee}
\DeclareMathOperator{\Vm}{\raisebox{-0.5em}{\ensuremath{--}}\kern-1.0ex\bigvee}
\newcommand{\upto}{\nearrow}
\newcommand{\lOneNorm}[1]{\ensuremath{\mathchoice{\left\lvert #1 \right\rvert}{\lvert#1\rvert}{\lvert#1\rVert}{\lvert#1\rvert}_{l_1}}}
\newcommand{%
  \tikzexternalenable
  \tikzsetnextfilename{}
  \input{.tikz}
  \tikzexternaldisable
}[1]{%
  \tikzexternalenable
  \tikzsetnextfilename{#1}
  \input{#1.tikz}
  \tikzexternaldisable
}
\begin{document}

\title[Penalty ADM for mixed-integer optimal control]{Penalty alternating direction methods for mixed-integer optimal control with combinatorial constraints}
\author[S. Göttlich, F.M. Hante, A. Potschka, L. Schewe]{Simone~Göttlich$^{1}$, Falk~M.~Hante$^{2}$, Andreas~Potschka$^{3}$, Lars~Schewe$^{4}$}
\address{
  $^1$ Universität Mannheim; 
  $^2$ Humboldt-Universität zu Berlin, Unter den Linden 6, 10099 Berlin, Germany;
  $^3$ Interdisciplinary Center for Scientific Computing, Heidelberg University,
  Im Neuenheimer Feld~205, 69120~Heidelberg, Germany; 
  $^4$    The University of Edinburgh, School of Mathematics, James Clerk Maxwell
   Building, Peter Guthrie Tait
   Road, Edinburgh, EH9 3FD, UK}

\date{\today}

\begin{abstract}
We consider mixed-integer optimal control problems with combinatorial constraints that couple over time such as minimum dwell times. We analyze a lifting and decomposition approach into a mixed-integer optimal control problem without combinatorial constraints and a mixed-integer problem for the combinatorial constraints in the control space. Both problems can be solved very efficiently with existing methods such as outer convexification with sum-up-rounding strategies and mixed-integer linear programming techniques. The coupling is handled using a penalty-approach. We provide an exactness result for the penalty which yields a solution approach that convergences to partial minima. We compare the quality of these dedicated points with those of other heuristics amongst an academic example and also for the optimization of electric transmission lines with switching of the network topology for flow reallocation in order to satisfy demands.

\end{abstract}

\subjclass[2010]{49J15, 49J20, 65K05, 90C11
}

\maketitle

\smallskip
\noindent \textbf{Keywords.}  mixed-integer optimization, partial differential equations, dwell-time constraints, alternating direction methods, penalty methods

\section{Introduction}\label{sec:intro}
Optimal control problems subject to integer restrictions on some part of the controls have recently received 
a lot of attention in the literature. This problem class is a convenient way to model, for instance, autonomous 
driving in case of vehicles with gear shift power units \cite{KirchesEtAl2010}, contact problems 
such as robotic multi-arm transport \cite{BussEtAl2002}, or the operation of networked infrastructure systems
such as gas pipelines \cite{Hante2019}, water canals \cite{HanteEtAl2017}, traffic roads \cite{GoettlichPotschkaZiegler2017}, and power transmission lines
\cite{GoettlichPotschkaTeuber2019} with switching of valves, gates, traffic lights and interconnectors, respectively.   
Often, the integer controls are additionally constrained in order to prevent certain switching configurations, 
to limit the number of switches or to enforce certain dwell or dead times after a switch. In this paper, we 
consider such combinatorial constraints with a focus on conditions which cannot be imposed pointwise and hence 
couple over time. 

A discretization of such problems naturally leads to mixed-integer non-linear programs that often become
computationally intractable when the discretization stepsizes tend to zero. Moreover, when passing to the limit, 
one may face convergence issues \cite{HanteSchmidt2019}. A computationally much more efficient alternative
solution approach is based on decomposition techniques, splitting the problem into a continuous subproblem by partial
outer convexification with relaxation of binary multipliers (POC) combined with a combinatorial integral approximation 
problem (CIAP) \cite{Sager2006,Kirches2010,SagerJungKirches2011,SagerBockDiehl2012,HanteSager2013,JungReineltSager2015,MannsKirches2019}.
However, in the presence of combinatorial constraints that couple over time, this approach only yields feasible
solutions with a priori lower bounds, but yet without any characterization of optimality in some reasonable sense
on the mixed-integer level.

We consider here another approach based on the idea of alternating direction methods (ADM). This will provide feasible 
solutions which can be characterized as partially optimal in a lifted sense. The approach uses POC and CIAP in one direction and a 
mixed-integer linear problem (differing from the combinatorial integral approximation problem) in another direction.
Both directions are weakly coupled using a penalty term with adapting an idea outlined in \cite{geissler.ea:fpump-adm}. 
Based on exactness of this penalty, we provide a convergence
result of this method. Our analysis applies to problems in the setting of abstract semilinear evolutions subject to 
control constraints. However, the methods can be extended to state constraints. In particular, the techniques apply 
to optimal control problems with ordinary differential equations.
The method can be also seen as an adaptation of classical feasibility pump
algorithms (for an overview, see~\cite{Berthold2019}) with heavy structure exploitation for mixed-integer optimal control problems.

One feature of our approach is a clear separation of the combinatorial aspects from the continuous control aspects of the problem.
This is in contrast to, e.g., the approach proposed in \cite{Takapoui20165619,Takapoui20202}. There, the full problem is discretized
and then a variant of an Alternating Direction Method of Multipliers (ADMM) is used to obtain heuristic solutions. Further recent applications
of ADM type methods are related to electricity networks, see e.g.~\cite{Braun2018,Engelmann2020,Magnusson2015}.

In a numerical study, we consider two problems from the \href{https://mintoc.de}{mintOC.de} library \cite{sager2012benchmark}, 
which we augment by minimum dwell-time constraints. We compare the proposed approach with direct discretization and mixed-integer 
programming techniques in order to address local vs. global optimality and to the decomposition with POC and CIAP as a heuristic.

We note that continuous reformulations of such problems with switching time and mode insertion optimization or combinatorial constraints 
can also be seen as an alternating direction method \cite{DeMarchi2019,Palagachev2017,RuefflerHante2016,RuefflerHanteMehrmann2018}, but
that the approach proposed here is different.

The article is organized as follows. In Section~\ref{sec:problem}, we present the problem formulation. In Section~\ref{sec:epsADM},
we extend the framework of alternating direction methods to partial $\varepsilon$-optimality. In Section~\ref{sec:ADMforMIOCP},
we apply the $\varepsilon$-ADM framework together with penalty techniques to mixed-integer optimal control problems as the main algorithm and 
develop the convergence theory. In Section~\ref{sec:numstudy}, we provide our numerical results. In Section~\ref{sec:conclusion}, 
we give concluding remarks.


\section{Problem formulation} \label{sec:problem}
We consider a mixed-integer optimal control problem of the form
\begin{subequations}
  \label{eq:abstract-problem}
\begin{alignat}{2}
 \min_{y,u,v}\quad &\Phi(y(T))\label{eq:cost}\\
 \st~&\dot{y}(t)=Ay(t)+f(y(t),u(t),v(t))\quad &&\text{on}~Y,~t \in (0,T),~y(0)=y_0,\label{eq:StateEq}\\
 &u \in \Sigma \subset \Ut, \label{eq:CCC}\\
 &v \in \Gamma \subset \Vt,\label{eq:DCC}\\
 &y \in \Yt,\label{eq:spaces}
\end{alignat}
\end{subequations}
where for some $M\in\NN$ and $p \in (0,\infty]$, $Y$ and $U$ are Banach spaces, $\Yt=C([0,T];Y)$, $\Ut=L^p(0,T;U)$, 
$\Vt=L^\infty(0,T;\{0,1\}^M)$,
$A$ is a (densely defined) linear operator on $Y$, $f$ is a nonlinear mapping $f\: Y \times U \times \{0,1\}^M \to Y$,
$\Phi$ is a nonlinear function $\Phi\: Y \to \RR \cup\{\infty\}$ representing state costs, 
$\Sigma$ is a subset of $\Ut$ representing constraints on the continuous control $u$,
and $\Gamma$ is a subset of $\Vt$ representing combinatorial constraints 
(e.\,g., dwell-time constraints and switching order constraints).

We say that the set of combinatorial constraints $\Gamma$ has a \emph{uniform finiteness property}, if there exists a constant $n_s \in \NN$ 
such that $v \in \Gamma$ implies that $v$ is piecewise constant with at most $n_s$ switching points.

\begin{example}[Combinatorial constraints]
With $v=(v_1,\ldots,v_M)$ being the componentwise respresentation of $v \in \Vt$ and the total variation of 
the $i$th component on the interval $(t_1,t_2) \subset (0,T)$ being denoted by
\begin{equation*}
 |v_i|_{(t_1,t_2)} = \sup\left\{ \int_{t_1}^{t_2} v_i(t) \phi'(t)\,dt : \phi \in C^1_c(t_1,t_2), \|\phi\|_\infty \leq 1 \right\},
\end{equation*}
where $C^1_c(t_1,t_2)$ denotes continuously differentiable vector functions of compact support on $(t_1,t_2)$, 
we can for example enforce a minimal dwell-time $\tau_{\min}$ with the constraint
\begin{equation}\label{eq:mindwelltimeconstraint}
v_i(t+\tau_{\min})-v_i(t)+|v_i|_{(t,t+\tau_{\min})} \leq 2 ~\text{for all}~t \in (0,T-\tau_{\min}),~i=1,\ldots,M
\end{equation}
or directly limit the total number of switches for the $i$th component to $n_s^{\max}$ by
\begin{equation}\label{eq:numswitchconstraint}
 |v_i|_{(0,T)} \leq n_s^{\max},~i=1,\ldots,M.
\end{equation}
In both cases it is easy to see that any set $\Gamma \subset \Vt$ containing one of these constraints has the uniform finiteness property.
Further additional constraints are of course possible, for instance, a maximum dwell-time $\tau_{\max}$ for a subset of 
components $I \subset \{1,\ldots,M\}$
\begin{equation}\label{eq:maxdwelltimeconstraint}
  v_i(t)-v_i(t-\tau_{\max})+|v_i|_{(t-\tau_{\max},t)} \geq 2 ~\text{for all}~t \in (\tau_{\max},T),~i \in I.
\end{equation}
Constraints of the form \eqref{eq:mindwelltimeconstraint}--\eqref{eq:maxdwelltimeconstraint} or variants of it are significant 
in many applications that involve switching control, but they are typically extremely difficult to be treated in the context of mixed-integer 
optimal control because they are not defined pointwise in time.
\end{example}

Concerning the wellposedness of the problem, we make the following assumptions.

\begin{assumption}\label{ass:main} Suppose that $A$ generates a strongly continuous semigroup $e^{tA}$ on $Y$, and that there exists a constant $K>0$ such that,
for all $v \in \{0,1\}^M$,
\begin{itemize}
 \item[i)] the map $(y,u) \mapsto f(y,u,v)$ is continuous on $Y \times U$,
 \item[ii)] $\|f(y,u,v)\|_Y \leq K(1+\|y\|)$ for all $y \in Y$, $u \in U$,
 \item[iii)] $\|f(y,u,v)-f(z,u,v)\|_Y \leq K\|y-z\|_Y$ for all $y,z \in Y$, $u \in U$.
\end{itemize}
Moreover, assume that $\Phi\colon Y \to \mathbb{R}$ is Lipschitz continuous on bounded subsets of $Y$.
\end{assumption}

We note that the conditions of Assumption~\ref{ass:main} are sufficient for the state equation~\eqref{eq:StateEq} to admit a unique solution 
$y(\cdot;u,v)$ in $C([0,T];Y)$ given by
\begin{equation}\label{eq:soldef}
y(t)=e^{tA}y_0 + \int_0^t e^{(t-\tau)A}f(y(\tau),u(\tau),v(\tau))\,d\tau.  
\end{equation}
Of course, other conditions are also possible, see \eg \cite{Pazy}. Moreover, the uniform finiteness property is crucial for the existence of 
optimal solutions for the problem \eqref{eq:abstract-problem}. For additional problem specific assumptions, appropriate wellposedness results can 
for example be obtained via parametric programming. The next theorem illustrates this for the case of generators of immediately compact semigroups.

\begin{theorem}
Suppose that Assumption~\ref{ass:main} holds. Moreover, assume that $X$ is separable and reflexive, $\Sigma=U_t$ and that $f(y,U,v_i)$ is closed and convex in $Y$, $e^{tA}$ is compact for $t>0$ and 
that $\Gamma$ has the uniform finiteness property. Then the problem \eqref{eq:abstract-problem} has an optimal solution $(y^*,u^*,v^*)$.
\end{theorem}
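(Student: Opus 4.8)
The plan is to argue by the direct method of the calculus of variations, using the compactness of the semigroup to upgrade weak convergence of the nonlinearity to strong convergence of the states, and the convexity of the velocity sets $f(y,U,v)$ to close the feasible set. Let $(y_n,u_n,v_n)$ be a minimizing sequence for \eqref{eq:abstract-problem} (the feasible set being nonempty), so that $\Phi(y_n(T))$ tends to the optimal value, which is finite since $\Phi$ is real-valued by Assumption~\ref{ass:main}. From the growth bound ii) and the boundedness of $e^{tA}$ on $[0,T]$, applying Gronwall's inequality to the variation-of-constants formula \eqref{eq:soldef} yields a uniform bound $\sup_n \|y_n\|_{\Yt} < \infty$, and hence a uniform pointwise bound $\|f(y_n(t),u_n(t),v_n(t))\|_Y \le C$. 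Writing $g_n(t) := f(y_n(t),u_n(t),v_n(t))$, the sequence $(g_n)$ is bounded in $L^\infty(0,T;Y)$, hence in $L^2(0,T;Y)$; since $Y$ is separable and reflexive, $L^2(0,T;Y)$ is reflexive and I may extract a subsequence with $g_n \rightharpoonup g$ weakly in $L^2(0,T;Y)$.

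Next I would treat the combinatorial variable. By the uniform finiteness property, each $v_n$ is piecewise constant with at most $n_s$ switching points; encoding $v_n$ by its (at most $n_s$) ordered switching times in the compact cube $[0,T]^{n_s}$ together with the attained values in the finite set $(\{0,1\}^M)^{n_s+1}$, a pigeonhole argument on the finitely many value patterns followed by extraction of a convergent subsequence of switching times produces $v_n \to v^*$ in $L^1(0,T;\RR^M)$, with $v^*$ again piecewise constant with at most $n_s$ switches and $v^* \in \Gamma$ by closedness of $\Gamma$ under this limit (as holds, for instance, for the constraints \eqref{eq:mindwelltimeconstraint}--\eqref{eq:maxdwelltimeconstraint}). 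Passing to a further subsequence gives $v_n \to v^*$ pointwise a.e.\ on $(0,T)$.

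The key analytic step is the compactness of the convolution operator $\Lambda g := \int_0^{(\cdot)} e^{((\cdot)-\tau)A} g(\tau)\,d\tau$ from $L^2(0,T;Y)$ into $\Yt$, which holds precisely because $e^{tA}$ is compact for $t>0$. Consequently $\Lambda g_n \to \Lambda g$ strongly in $\Yt$, so that $y_n = e^{(\cdot)A}y_0 + \Lambda g_n \to y^* := e^{(\cdot)A}y_0 + \Lambda g$ strongly in $\Yt$, and in particular $y_n(t) \to y^*(t)$ a.e.\ along a further subsequence. It then remains to identify $g$ as an admissible velocity: since $g_n(t) \in f(y_n(t),U,v_n(t))$ with $y_n(t)\to y^*(t)$ and $v_n(t)\to v^*(t)$ a.e., and $f(y,U,v)$ is closed and convex, a Cesari-type closure theorem (applying Mazur's lemma to the weak limit $g$) gives $g(t) \in f(y^*(t),U,v^*(t))$ for a.e.\ $t$; a measurable selection (Filippov's lemma, valid by separability of $Y$) then furnishes a measurable control $u^*$ with $g(t)=f(y^*(t),u^*(t),v^*(t))$ a.e., and $u^* \in \Sigma = \Ut$. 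Thus $(y^*,u^*,v^*)$ is feasible with state $y^*$. I expect this closure-and-selection step to be the main obstacle, as it is where the convexity of the velocity sets and the reflexivity and separability of $Y$ are genuinely used.

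Finally, strong convergence $y_n \to y^*$ in $\Yt$ gives $y_n(T)\to y^*(T)$ in $Y$; as the points $y_n(T)$ lie in a bounded set on which $\Phi$ is Lipschitz, $\Phi(y^*(T)) = \lim_n \Phi(y_n(T))$ equals the optimal value of \eqref{eq:abstract-problem}, so $(y^*,u^*,v^*)$ is an optimal solution.
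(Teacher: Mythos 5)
Your argument is correct in outline, but it takes a genuinely different route from the paper. The paper's proof is a two\nobreakdash-stage parametric argument: for each fixed vector of (at most $n_s$) switching times the inner problem of minimizing over $u$ is a standard semilinear optimal control problem whose solvability and whose value's continuous dependence on the data are quoted from Cannarsa--Frankowska, and via \eqref{eq:soldef} this value depends continuously on the switching times, which range over the compact cube $[0,T]^{n_s}$; Weierstrass then closes the outer problem. You instead run the direct method once on the full problem: Gronwall bounds, weak $L^2(0,T;Y)$ compactness of the nonlinearity (using reflexivity and separability of $Y$), compactness of the convolution operator $\Lambda$ induced by the compact semigroup to get strong convergence of the states, your pigeonhole-plus-switching-times argument for $v_n\to v^*$, and a Mazur/Cesari closure using the convexity and closedness of $f(y,U,v)$ together with the uniform Lipschitz bound iii) to place the weak limit in the right velocity set. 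This is self-contained where the paper outsources the analytic core to a citation, and it makes visible exactly where each hypothesis (reflexivity, compactness of $e^{tA}$, convexity of the velocity sets) enters. Two caveats, both of which the paper's own proof also leaves implicit: you need the set of admissible switching configurations determined by $\Gamma$ to be closed under limits of switching times (true for \eqref{eq:mindwelltimeconstraint}--\eqref{eq:maxdwelltimeconstraint}, but not part of the stated hypotheses), and your final Filippov selection only delivers a measurable $u^*$ with $f(y^*(t),u^*(t),v^*(t))=g(t)$ --- since the growth bound ii) is uniform in $u$, nothing forces this selection to lie in $L^p(0,T;U)$ for finite $p$, so strictly speaking one should either take $p=\infty$, assume $U$ bounded, or argue that a selection can be chosen in $\Ut$; the paper avoids this by working with the value function $\inf_u$ rather than with an explicit minimizer of the limit problem.
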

\begin{proof}
Under the uniform finiteness property, the problem \eqref{eq:abstract-problem} can be considered as a parametric two stage problem, where the inner problem
consists of minimizing with respect to $u$ and the outer problem is a minimization with respect to finitely many switching times $\tau_k$. Under the given
assumptions, the inner problem has an optimal solution and the optimal value depends continuously on the initial data \cite{CannarsaFrankowska1992}, and hence 
via \eqref{eq:soldef} on the switching times $\tau_k \in [0,T]$. The claim then follows from the extreme value theorem of Weierstrass.
\end{proof}

For the solution approach considered below, we note that under the Assumption~\ref{ass:main}, we can consider the reduced problem
\begin{subequations}
  \label{eq:abstract-problem-reduced}
\begin{alignat}{2}
 \min_{u,v}\quad &\Psi(u,v):=\Phi(y(T;u,v))\label{eq:cost-red}\\
 \st~& u \in \Sigma \subset \Ut, v \in \Gamma \subset \Vt
\end{alignat}
\end{subequations}
and results for \eqref{eq:abstract-problem-reduced} can be carried over to the original problem \eqref{eq:abstract-problem} via \eqref{eq:soldef}.

\section{ADM with $\varepsilon$-optimality}\label{sec:epsADM}

As a solution approach we extend here the idea of ADM. Suppose we were to minimize a nonlinear function
$\Psi(u,v)$ over $u \in U$ and $v \in V$ subject to constraints $(u,v) \in \Omega$ for some given feasible set $\Omega$. Further suppose that we can compute $\varepsilon$-optimal
solutions for each of the partials $u$ (with $v$ fixed) and $v$ (with $u$ fixed). Then, given some $\varepsilon \geq 0$ and some 
guess $(u^0,v^0)$, we can consider the following sequential approach to compute a solution candidate $(u^*,v^*)$:
\begin{itemize}
 \item[i)] Find $u^{l+1}$ such that $\Psi(u^{l+1},v^l) \leq \Psi(u,v^l) + \frac{\varepsilon}{2}$ for all $(u,v^l) \in \Omega$.
 \item[ii)] If $\Psi(u^{l+1},v^l) \geq \Psi(u^l,v^l)-\frac{\varepsilon}{2}$, set $(u^*,v^*)=(u^l,v^l)$.
 \item[iii)] Find $v^{l+1}$ such that $\Psi(u^{l+1},v^{l+1}) \leq \Psi(u^{l+1},v) + \frac{\varepsilon}{2}$ for all $(u^{l+1},v) \in \Omega$.
 \item[iv)] If $\Psi(u^{l+1},v^{l+1}) \geq \Psi(u^{l+1},v^l)-\frac{\varepsilon}{2}$, set $(u^*,v^*)=(u^{l+1},v^l)$.
 \item[v)] Set $l=l+1$ and continue with step i).
 \end{itemize}
 This algorithm may not terminate.
 For classical ADM, there are well-known conditions under which we can ensure that
 the algorithm does not cycle, i.e.\ that the algorithm does not get stuck in a loop of different solutions;
 for a discussion, see~\cite{geissler.ea:fpump-adm}.
 However, if it terminates, we can conclude that $(u^*,v^*)$ satisfies
\begin{subequations}
 \begin{alignat}{1}
  \Psi(u^*,v^*) &\leq \Psi(u,v^*)+\varepsilon,\quad \text{for all}~(u,v^*) \in \Omega \label{eq:O1}\\
  \Psi(u^*,v^*) &\leq \Psi(u^*,v)+\varepsilon,\quad \text{for all}~(u^*,v) \in \Omega \label{eq:O2}.  
 \end{alignat}
\end{subequations}
This can be seen as follows: If the algorithms terminates in step ii), we have for some $\hat{l}$ from ii)
\begin{equation*}
 \Psi(u^*,v^*)= \Psi(u^{\hat{l}},v^{\hat{l}}) \leq \Psi(u^{\hat{l}+1},v^{\hat{l}}) + \frac{\varepsilon}{2} 
\end{equation*} 
and from i)
\begin{equation*}
\Psi(u^{\hat{l}+1},v^{\hat{l}}) \leq \Psi(u,v^{\hat{l}}) + \frac{\varepsilon}{2},\quad \text{for all}~(u,v^{\hat{l}}) \in \Omega,
\end{equation*}
hence, with $v^{\hat{l}}=v^*$, we get 
\begin{equation*}
  \Psi(u^*,v^*) \leq \Psi(u,v^*) + \varepsilon,\quad \text{for all}~(u,v^*) \in \Omega.
\end{equation*}
Moreover, from step iii) with $l=\hat{l}-1$, we have
\begin{equation*}
 \Psi(u^*,v^*)=\Psi(u^{\hat{l}},v^{\hat{l}}) \leq \Psi(u^{\hat{l}},v)+\frac{\varepsilon}{2},\quad\text{for all}~(u^{\hat{l}},v) \in \Omega,
\end{equation*}
hence, again with $u^{\hat{l}}=u^*$, we have
\begin{equation*}
  \Psi(u^*,v^*) \leq \Psi(u^*,v)+\varepsilon,\quad\text{for all}~(u^*,v)\in\Omega. 
\end{equation*}
If the algorithm terminates in step iv), we have for some $\hat{l}$ from iv)
\begin{equation*}
 \Psi(u^*,v^*)=\Psi(u^{\hat{l}+1},v^{\hat{l}})\leq \Psi(u^{\hat{l}+1},v^{\hat{l}+1})+\frac{\varepsilon}{2}
\end{equation*}
and from iii)
\begin{equation*}
 \Psi(u^{\hat{l}+1},v^{\hat{l}+1}) \leq \Psi(u^{\hat{l}+1},v)+\frac{\varepsilon}{2},\quad\text{for all}~(u^{\hat{l}},v) \in \Omega.
\end{equation*}
Hence, with $u^{\hat{l}+1}=u^*$, we get
\begin{equation*}
  \Psi(u^*,v^*) \leq  \Psi(u^*,v)+\varepsilon,\quad\text{for all}~(u^*,v) \in \Omega.
\end{equation*}
Moreover, from i), we have
\begin{equation*}
 \Psi(u^*,v^*)=\Psi(u^{\hat{l}+1},v^{\hat{l}}) \leq \Psi(u,v^{\hat{l}})+\frac{\varepsilon}{2},\quad\text{for all}~(u,v^{\hat{l}}) \in \Omega.
\end{equation*}
Hence with $v^{\hat{l}}=v^*$, we get
\begin{equation*}
 \Psi(u^*,v^*) \leq \Psi(u,v^*) + \varepsilon,\quad\text{for all}~(u,v^*) \in \Omega.
\end{equation*}
We shall call points $(u^*,v^*)$ satisfying \eqref{eq:O1} and \eqref{eq:O2} p-$\varepsilon$-optimal as a shorthand for partially $\varepsilon$-optimal.


\section{ADM and p-minima for mixed-integer optimal control problems}\label{sec:ADMforMIOCP}
Concerning the mixed-integer optimal control problem \eqref{eq:abstract-problem} or equivalently for the reduced form 
\eqref{eq:abstract-problem-reduced}, a natural ADM splitting is using the directions $u \in U$ 
and $v \in V$. However, in the direction of $v$, this still results in a mixed-integer nonlinear optimization problem subject to a 
differential equation. To avoid this, we will instead use that \eqref{eq:abstract-problem} is equivalent to
\begin{subequations}\label{eq:abstract-problem-split-v-red}
\begin{alignat}{1}
 \min_{u,v,\tilde v}\quad &\Psi(u,v)\label{eq:cost-split-v-red}\\
 \st~&u \in \Sigma \subset \Ut, \label{eq:CCC-split-v}\\
 &v = \tilde v \label{eq:vsplit-split-v}\\
 &\tilde v \in \Gamma \subset \Vt, v \in \Vt,\label{eq:DCC-split-v}
\end{alignat}
\end{subequations}
and consider a splitting with respect to the directions $(u,v)$ and $\tilde{v}$. This particular splitting is chosen deliberately in view of the 
fact that the two subproblems can be efficiently solved to $\varepsilon$-optimallity with existing techniques.
Motivated by \eqref{eq:O1} and \eqref{eq:O2}, we say that a point $(u^*,v^*)$ is p-$\varepsilon$-minimal for \eqref{eq:abstract-problem-reduced} if $([u^*,v^*],v^*)$ is p-$\varepsilon$-optimal for \eqref{eq:abstract-problem-split-v-red}. Consistently, we say that a point $(y^*,u^*,v^*)$ is p-$\varepsilon$-minimal
for the original problem \eqref{eq:abstract-problem} if $(u^*,v^*)$ is a p-$\varepsilon$-minimum of \eqref{eq:abstract-problem-reduced} 
and $y^*$ is a solution of the state equation \eqref{eq:StateEq} with $u=u^*$ and $v=v^*$. 
We note that p-$\varepsilon$-minima are not necessarily global $\varepsilon$-minima. But any global minimum of \eqref{eq:abstract-problem}
is p-$\varepsilon$-minimal with $\varepsilon=0$. For brevity, we call p-$\varepsilon$-minima with $\varepsilon=0$ just p-minima.

The above discussion motivates to compute p-minima of good quality. To this end, we enforce the coupling of $v$ 
and $\tilde v$ in \eqref{eq:vsplit-split-v} weakly with a suitable penalty term. The penalty parameter can then eventually 
be used to avoid getting stuck in p-$\varepsilon$-minima with too high objective. This idea was introduced recently in \cite{geissler.ea:fpump-adm} 
for classical ADM in the context of feasibility pumps for MINLPs. Suitably adapted to our setting here, we are going to show an exactness result for the penalty 
problem. 

We may consider the optimal value function 
of the reduced problem \eqref{eq:abstract-problem-reduced} partially with respect to $u$
\begin{equation}
  \label{eq:abstract-problem-reduced-partial}
 \eta(v):=\inf_{u \in \Sigma \subset \Ut} \Psi(u,v)
\end{equation}
as a function $\eta\colon \Vt \to \RR \cup \{-\infty,+\infty\}$. We will impose the following technical assumption on $\eta$ using the 1-norm $\lOneNorm{v}=\sum_{i=1}^M |v_i|$ on $\{0,1\}^M$.

\begin{assumption}\label{ass:cq-implicit} Given an optimal solution $(y^*,u^*,v^*)$ of \eqref{eq:abstract-problem}, or equivalently an optimal solution $(u^*,v^*)$ of problem \eqref{eq:abstract-problem-reduced}
the value function $\eta$ defined in \eqref{eq:abstract-problem-reduced-partial} is locally Lipschitz continuous 
in the sense that for all $\delta>0$ there exists a constant $L$ such that
\begin{equation}\label{eq:eta-lipschitz}
 |\eta(v^*)-\eta(v)| \leq L \int_0^T \lOneNorm{v^*(t)-v(t)}\,dt
\end{equation}
for all $v \in \Vt$ with $\int_0^T \lOneNorm{v^*(t)-v(t)}\,dt \leq \delta$.
\end{assumption}

Assumption~\ref{ass:cq-implicit} is typically satisfied if the optimal solution $(y^*,u^*,v^*)$ satisfies a constraint qualification. For instance for mixed-integer linear quadratic optimal control problems the Lipschitz continuity of the optimal value function under a constraint qualification of a Slater-type is discussed in \cite{GugatHante2017}. For mixed-integer finite-dimensional problems, conditions are provided in \cite{Gugat1997} and \cite{HanteSchmidt2019}. 

Now we consider the following auxiliary problem
\begin{subequations}
  \label{eq:abstract-problem-split-v-penalty}
\begin{alignat}{1}
 \min_{y,u,v,\tilde v}\quad &\Phi(y(T))+ \rho \int_0^T\lOneNorm{v(t)-\tilde v(t)}\,dt\label{eq:cost-split-v-penalty}\\
 \st~&\dot{y}(t)=Ay(t)+f(y(t),u(t),v(t))\quad \text{on}~Y,~t \in (0,T),~y(0)=y_0\label{eq:StateEq-split-v-penalty}\\
 &u \in \Sigma \subset \Ut,
 ~v \in \Vt,
 ~\tilde v \in \Gamma \subset \Vt,
 ~y \in \Yt,
\end{alignat}
\end{subequations}
with a penalty parameter $\rho \geq 0$. With \eqref{eq:soldef} and \eqref{eq:cost-red} we can reduce \eqref{eq:abstract-problem-split-v-penalty} to
\begin{subequations}
  \label{eq:abstract-problem-split-v-penalty-red}
\begin{alignat}{1}
 \min_{u,v,\tilde v}\quad &\Psi_\rho(u,v,\tilde v):=\Psi(u,v)+ \rho \int_0^T\lOneNorm{v(t)-\tilde v(t)}\,dt\label{eq:cost-split-v-penalty-red}\\
 \st~& u \in \Sigma \subset \Ut,
 ~v \in \Vt,
 ~\tilde v \in \Gamma \subset \Vt.
\end{alignat}
\end{subequations}


The following result shows the exactness of the penalty term in \eqref{eq:abstract-problem-split-v-penalty} and relates global minima of \eqref{eq:abstract-problem-reduced} to p-minima of \eqref{eq:abstract-problem-split-v-penalty-red}.

\begin{theorem}\label{thm:exactness} 
  Let $(u^*,v^*)$ be a global minimum of problem
  \eqref{eq:abstract-problem-reduced} satisfying
  Assumption~\ref{ass:cq-implicit}. Then, there exists a penalty parameter
  $\bar\rho$ such that $([u^*,v^*], v^*)$ is a p-minimum of
  \eqref{eq:abstract-problem-split-v-penalty-red} for all $\rho\geq \bar{\rho}$.
\end{theorem}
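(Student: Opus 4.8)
The plan is to verify the two partial optimality conditions \eqref{eq:O1} and \eqref{eq:O2} directly for the point $([u^*,v^*],v^*)$ under the splitting of \eqref{eq:abstract-problem-split-v-penalty-red} into the first block $(u,v)$ with $u\in\Sigma$, $v\in\Vt$ and the second block $\tilde v\in\Gamma$. Note first that $v^*\in\Gamma$ by feasibility, so $\tilde v=v^*$ is admissible for the second block, and at this diagonal point the penalty vanishes, giving $\Psi_\rho(u^*,v^*,v^*)=\Psi(u^*,v^*)$. Since $(u^*,v^*)$ is a global minimum of \eqref{eq:abstract-problem-reduced}, the control $u^*$ is in particular optimal for $v^*$ held fixed, whence $\Psi(u^*,v^*)=\eta(v^*)$. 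Condition \eqref{eq:O2} is then immediate for every $\rho\geq 0$: for any $\tilde v\in\Gamma$ the penalty $\rho\int_0^T\lOneNorm{v^*(t)-\tilde v(t)}\,dt\geq 0$, so $\Psi_\rho(u^*,v^*,\tilde v)\geq\Psi(u^*,v^*)=\Psi_\rho(u^*,v^*,v^*)$.

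The substance lies in \eqref{eq:O1}. Because the penalty does not depend on $u$ and $\Psi(u,v)\geq\eta(v)$, minimizing over $u\in\Sigma$ shows that \eqref{eq:O1} over the whole first block is equivalent to the scalar estimate
\begin{equation}\label{eq:key-decrease}
 \eta(v^*)-\eta(v)\leq\rho\int_0^T\lOneNorm{v(t)-v^*(t)}\,dt\qquad\text{for all }v\in\Vt.
\end{equation}
I would prove \eqref{eq:key-decrease} by splitting on the size of the distance $d(v):=\int_0^T\lOneNorm{v(t)-v^*(t)}\,dt$. On the near regime $d(v)\leq\delta$, Assumption~\ref{ass:cq-implicit} yields $\eta(v^*)-\eta(v)\leq L\,d(v)$, so any $\rho\geq L$ suffices there.

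The main obstacle is the complementary regime $d(v)>\delta$, where the local Lipschitz bound is unavailable and one must control $\eta(v^*)-\eta(v)$ by a global argument. Here I would invoke Assumption~\ref{ass:main}: combining $\|f(y,u,v)\|_Y\leq K(1+\|y\|)$ with a semigroup bound $\|e^{tA}\|\leq M_0 e^{\omega t}$ and the mild solution representation \eqref{eq:soldef}, a Gronwall estimate produces a radius $R>0$ with $\|y(t;u,v)\|_Y\leq R$ on $[0,T]$, uniformly in $u\in\Sigma$ and $v\in\Vt$. As $\Phi$ is Lipschitz on bounded sets it is bounded on the ball of radius $R$, so $|\Psi(u,v)|\leq C$ for a constant $C$ independent of $(u,v)$, hence $|\eta(v)|\leq C$ for all $v$ and in particular $\eta(v^*)-\eta(v)\leq 2C$. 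Since $d(v)>\delta$ forces the right-hand side of \eqref{eq:key-decrease} to exceed $\rho\delta$, the choice $\rho\geq 2C/\delta$ gives $\rho\,d(v)>2C\geq\eta(v^*)-\eta(v)$.

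Combining the two regimes, $\bar\rho:=\max\{L,\,2C/\delta\}$ makes \eqref{eq:key-decrease}, and therefore \eqref{eq:O1}, valid for every $\rho\geq\bar\rho$; together with \eqref{eq:O2} this establishes that $([u^*,v^*],v^*)$ is p-minimal for \eqref{eq:abstract-problem-split-v-penalty-red}. I expect the only delicate points to be the uniform state bound (routine under Assumption~\ref{ass:main}) and the clean reduction of \eqref{eq:O1} to \eqref{eq:key-decrease}; the interaction of the local Lipschitz constant with the uniform bound through the threshold $\delta$ is precisely what renders the penalty exact.
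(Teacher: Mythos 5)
Your proof is correct and follows the same overall route as the paper's: both verify the two partial optimality conditions directly, observe that the $\tilde v$-direction is trivial because the penalty is nonnegative and vanishes at $\tilde v = v^*$, and reduce the $(u,v)$-direction to the scalar estimate $\eta(v^*)-\eta(v)\le\rho\int_0^T\lOneNorm{v-v^*}\,dt$ by passing to the value function $\eta$. The one place you genuinely diverge is the treatment of $v$ with $\int_0^T\lOneNorm{v(t)-v^*(t)}\,dt>\delta$: you derive a uniform bound $\abs{\Psi(u,v)}\le C$ from Assumption~\ref{ass:main} via a Gronwall argument and set $\bar\rho=\max\{L,\,2C/\delta\}$, whereas the paper applies Assumption~\ref{ass:cq-implicit} with no case distinction at all. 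The paper can afford this because $v$ and $v^*$ take values in $\{0,1\}^M$, so $\int_0^T\lOneNorm{v(t)-v^*(t)}\,dt\le MT$ for every $v\in\Vt$; choosing $\delta=MT$ in Assumption~\ref{ass:cq-implicit} therefore produces a single constant $L$ valid on all of $\Vt$, and $\bar\rho=L$ suffices. Your far-regime argument is sound and makes explicit a locality issue that the paper leaves implicit, at the cost of an extra boundedness lemma that the bounded $L^1$-diameter of $\Vt$ renders unnecessary; it would, however, be the right tool if the binary controls were replaced by controls from an unbounded set.
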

\begin{proof}
Let $(u^{*}, v^{*})$ be an optimal solution of \eqref{eq:abstract-problem-reduced}. We note that by construction $u = u^{*}$, $v = v^{*}$, $\tilde{v} = v^{*}$ is a global minimum of \eqref{eq:abstract-problem-split-v-red}.

We have to show that $([u^*,v^*],\tilde v^*)$ satisfies
\begin{subequations}\label{eq:partialmindef}
\begin{alignat}{1}
\Psi_\rho(u^*,v^*,\tilde v) &\geq \Psi_\rho(u^*,v^*,\tilde v^*)-\varepsilon \quad \text{for all}~(u^*,v^*,\tilde v)~\text{feasible for \eqref{eq:abstract-problem-split-v-penalty-red}}\label{eq:partialmindefA}\\
\Psi_\rho(u,v,\tilde v^*) & \geq \Psi_\rho(u^*,v^*,\tilde v^*)-\varepsilon \quad \text{for all}~(u,v,\tilde v^*)~\text{feasible for \eqref{eq:abstract-problem-split-v-penalty-red}}\label{eq:partialmindefB}
\end{alignat}
\end{subequations}
with $\varepsilon=0$.

It can be seen that condition~\eqref{eq:partialmindefA} holds with $\varepsilon=0$ for all $\rho \geq 0$. This follows directly from the definition of $\Psi_{\rho}$.

To show condition~\eqref{eq:partialmindefB} with $\varepsilon=0$, we assume we are given $(u, v)$ and consider the triple $(u, v, {v}^{*})$.
Without loss of generality, we may assume that $u$ is chosen optimally in the sense of \eqref{eq:abstract-problem-reduced-partial}.
By the definition of $\Psi_{\rho}$, condition \eqref{eq:partialmindefB} with $\varepsilon=0$ is equivalent to
\begin{equation}
  \label{eq:exactness-partialmindefB-1}
\Psi(u,v) + \rho \int_0^T\lOneNorm{v - {v}^{*}}\,dt \geq \Psi(u^*,v^*).
\end{equation}

We directly observe that if $\Psi(u,v) \geq \Psi(u^*,v^*)$  holds, then claim~\eqref{eq:exactness-partialmindefB-1} is
true for all $\rho \geq 0$.
This is, for instance, the case if
$v = {v}^{*}$ holds, i.e. if $(u,v,{v}^{*})$ is feasible for
\eqref{eq:abstract-problem-split-v-red}, because $(u^{*}, v^{*})$ is a global minimum of \eqref{eq:abstract-problem-split-v-red}.
Hence, we only need to consider the case
in which $\Psi(u,v) \leq \Psi(u^*,v^*)$ and $v \neq {v}^{*}$ hold.

As we have chosen $u$ optimally and because $u^{*}$ is also an optimal choice for $v^{*}$ in the sense
of \eqref{eq:abstract-problem-reduced-partial}, we can rewrite~\eqref{eq:exactness-partialmindefB-1} further to obtain
\begin{equation}
  \label{eq:exactness-partialmindefB-2}
\rho \int_0^T\lOneNorm{v - {v}^{*}}\,dt \geq \eta(v^{*}) - \eta(v).
\end{equation}
Now, we may use Assumption~\ref{ass:cq-implicit} to obtain
\begin{equation*}
  \eta(v^{*}) - \eta(v)
  = \abs{\eta(v^{*}) - \eta(v)}
  \leq L  \int_0^T\lOneNorm{v - {v}^{*}}\,dt.
\end{equation*}
This shows that condition~\eqref{eq:exactness-partialmindefB-2} is fulfilled
for all $\rho \geq L$.
Hence, we have shown that condition~\eqref{eq:partialmindefB} holds with $\varepsilon=0$ if we set $\bar{\rho} = L$.
\end{proof}

The essential idea of the proposed method now is to solve \eqref{eq:abstract-problem-split-v-penalty-red} using the method
discussed at the beginning of Section~\ref{sec:epsADM}.
So, in each iteration of the outer loop (index $k$) the penalty parameter $\rho$ is increased.
In the inner loop (index $l$),
we apply an alternating direction method to \eqref{eq:abstract-problem-split-v-penalty-red} with this parameter $\rho$
until we find a partial $\varepsilon$-optimum.
For this, we need to be able to solve two subproblems to accuracy $\varepsilon$: \eqref{eq:abstract-problem-split-v-penalty-red} with
fixed $\tilde v$ and \eqref{eq:abstract-problem-split-v-penalty-red} with fixed $(u,v)$.
For fixed $\tilde{v}$ \eqref{eq:abstract-problem-split-v-penalty-red} reduces to an optimal control problem and
for fixed $(u,v)$ \eqref{eq:abstract-problem-split-v-penalty-red} reduces to an mixed-integer linear problem (assuming
the constraints describing $\Gamma$ are linear).
Both of these problem types can be solved to $\varepsilon$ accuracy with standard techniques.

The algorithm is summarized in Algorithm~\ref{alg:penalty-eps-adm}.

\begin{algorithm}
\begin{algorithmic}
\STATE Choose $(u^{(0,*)},v^{(0,*)},\tilde v^{(0,*)}) \in \Sigma \times \Vt \times \Gamma$ and $\rho^{(1)}=0$
\FOR{$k=1,2,3,\ldots$}
  \STATE Set $(u^{(k,0)},v^{(k,0)},\tilde v^{(k,0)})=(u^{(k-1,*)},v^{(k-1,*)},\tilde v^{(k-1,*)})$
  \FOR{$l=0,1,2,\ldots$}
  \STATE For $\tilde v^{(k,l)}$ fixed, find $\frac{\varepsilon}{2}$-optimal solution $(u^{(k,l+1)},v^{(k,l+1)}) \in \Sigma \times \Vt$ of \eqref{eq:abstract-problem-split-v-penalty-red}
    \IF {$\Psi((u^{(k,l+1)},v^{(k,l+1)},\tilde v^{(k,l)}) \geq \Psi((u^{(k,l)},v^{(k,l)},\tilde v^{(k,l)}))-\frac{\varepsilon}{2}$}
      \STATE Set $(u^{(k,*)},v^{(k,*)},\tilde v^{(k,*)})=(u^{(k,l+1)},v^{(k,l+1)},\tilde v^{(k,l)})$
      \STATE \textbf{break}
    \ENDIF   
    \STATE For $(u^{(k,l+1)},v^{(k,l+1)})$ fixed, find $\frac{\varepsilon}{2}$-optimal solution $\tilde v^{(k,l+1)} \in \Gamma$ of \eqref{eq:abstract-problem-split-v-penalty-red}
    \IF {$\Psi((u^{(k,l+1)},v^{(k,l+1)},\tilde v^{(k,l+1)}) \geq \Psi((u^{(k,l+1)},v^{(k,l+1)},\tilde v^{(k,l)}))-\frac{\varepsilon}{2}$}
	\STATE Set $(u^{(k,*)},v^{(k,*)},\tilde v^{(k,*)})=(u^{(k,l)},v^{(k,l)},\tilde v^{(k,l)})$
        \STATE \textbf{break}
    \ENDIF
    \ENDFOR
    \STATE Choose $\rho^{(k+1)}>\rho^{(k)}$
\ENDFOR
\end{algorithmic}
\caption{Penalty-$\varepsilon$-ADM-Method (ADM-SUR)}
\label{alg:penalty-eps-adm}
\end{algorithm}

Concerning the convergence of Algorithm~\ref{alg:penalty-eps-adm}, we can now make the following statements.

\begin{theorem} \label{thm:convergence} Let $\rho^k \upto \infty$ and let $(u^k,v^k,\tilde v^k)_k$ be a sequence generated by
Algorithm~\ref{alg:penalty-eps-adm} with $(u^k,v^k,\tilde v^k) \to (u^*,v^*,\tilde v^*)$. Then $(u^*,v^*,\tilde v^*)$ is
a p-minimum of the feasibility measure $\chi(v,\tilde v)=\int_0^T |v(t)-\tilde v(t)|_{l_1}\,dt$.
\end{theorem}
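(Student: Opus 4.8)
The plan is to use that every outer iterate generated by Algorithm~\ref{alg:penalty-eps-adm} is, by its construction together with the reasoning of Section~\ref{sec:epsADM}, a p-$\varepsilon$-optimum of the penalised objective $\Psi_{\rho^{(k)}}$ from \eqref{eq:cost-split-v-penalty-red}, and then to drive $\rho^{(k)}\to\infty$ so that only the feasibility term $\chi$ survives. Since the outer sequence is defined, each inner loop must terminate, and the termination criteria (the two \textbf{if}-branches) play the role of steps ii) and iv) of the $\varepsilon$-ADM scheme; hence the derivation following \eqref{eq:O1}--\eqref{eq:O2} applies and shows that $(u^k,v^k,\tilde v^k)$ satisfies
\[
\Psi_{\rho^{(k)}}(u^k,v^k,\tilde v^k)\le\Psi_{\rho^{(k)}}(u,v,\tilde v^k)+\varepsilon\quad\text{for all }(u,v)\in\Sigma\times\Vt
\]
and
\[
\Psi_{\rho^{(k)}}(u^k,v^k,\tilde v^k)\le\Psi_{\rho^{(k)}}(u^k,v^k,\tilde v)+\varepsilon\quad\text{for all }\tilde v\in\Gamma .
\]

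Next I would insert $\Psi_\rho=\Psi+\rho\,\chi$ and rearrange. Fixing an arbitrary competitor $(u,v)\in\Sigma\times\Vt$, the first inequality becomes
\[
\chi(v^k,\tilde v^k)-\chi(v,\tilde v^k)\le\frac{\Psi(u,v)-\Psi(u^k,v^k)+\varepsilon}{\rho^{(k)}},
\]
whereas in the second inequality the $\Psi$-contributions are identical and cancel, so that after division by $\rho^{(k)}$ one obtains, for an arbitrary $\tilde v\in\Gamma$, the estimate $\chi(v^k,\tilde v^k)\le\chi(v^k,\tilde v)+\varepsilon/\rho^{(k)}$. To control the right-hand side of the first estimate I would establish that $\Psi$ is uniformly bounded on the feasible set: a Gronwall estimate applied to the mild-solution formula \eqref{eq:soldef} under the linear growth bound in Assumption~\ref{ass:main}~ii) bounds $\|y(T;u,v)\|_Y$ uniformly in $(u,v)$, and since $\Phi$ is Lipschitz, hence bounded, on bounded subsets of $Y$, this yields $|\Psi(u,v)|\le C$ uniformly. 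Thus the numerator stays bounded while $\rho^{(k)}\to\infty$, so both right-hand sides tend to zero.

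Finally I would pass to the limit on the left using continuity of $\chi$. As $\chi$ is, up to the componentwise norm, the $L^1(0,T)$-distance between its two arguments, the triangle inequality gives $|\chi(v^k,\tilde v^k)-\chi(v^*,\tilde v^*)|\le\|v^k-v^*\|_{L^1}+\|\tilde v^k-\tilde v^*\|_{L^1}$ and, likewise, $\chi(v,\tilde v^k)\to\chi(v,\tilde v^*)$ and $\chi(v^k,\tilde v)\to\chi(v^*,\tilde v)$. Letting $k\to\infty$ in the two estimates then produces $\chi(v^*,\tilde v^*)\le\chi(v,\tilde v^*)$ for all $(u,v)\in\Sigma\times\Vt$ and $\chi(v^*,\tilde v^*)\le\chi(v^*,\tilde v)$ for all $\tilde v\in\Gamma$, which are precisely the partial optimality conditions \eqref{eq:O1}--\eqref{eq:O2} for the objective $\chi$ with $\varepsilon=0$; hence $(u^*,v^*,\tilde v^*)$ is a p-minimum of $\chi$.

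The step I expect to be the main obstacle is this limit passage, and within it the continuity of $\chi$, which hinges on the (unspecified) topology in which $(u^k,v^k,\tilde v^k)\to(u^*,v^*,\tilde v^*)$. The clean choice is $L^1$-convergence of the binary components, the natural mode here because every $\tilde v^k\in\Gamma$ is piecewise constant with a uniformly bounded number of switches; under a weaker topology one would have to argue via lower semicontinuity of $\chi$ and could only recover the inequalities one-sidedly. A subtle but genuine point, already visible in the first estimate, is that the competitor $(u,v)$ there is paired with the \emph{moving} $\tilde v^k$ rather than with $\tilde v^*$, and it is exactly the continuity of $\chi$ in its second slot that repairs this mismatch in the limit.
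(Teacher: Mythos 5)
Your proposal follows essentially the same route as the paper's own proof: both start from the partial $\varepsilon$-optimality inequalities for $\Psi_{\rho^k}$ delivered by the inner loop, divide by $\rho^k$ so that the $\Psi$-terms vanish as $\rho^k\upto\infty$, and pass to the limit to obtain the partial optimality of $\chi$ in each slot. Your version is in fact somewhat more careful than the paper's (which normalizes by $|\rho^k|$ and leaves the boundedness of $\Psi(u^k,v^k)$ and the continuity of $\chi$ under the assumed convergence implicit), so no gap to report.
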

\begin{proof}
Let $(u,v,\tilde v^k)$ be feasible for \eqref{eq:abstract-problem-split-v-penalty-red}. Then 
\begin{equation}\label{eq:forconvproof}
 \Psi(u,v)+ \rho^k \int_0^T\lOneNorm{v(t)-\tilde v^k(t)}\,dt \geq \Psi(u^k,v^k)+ \rho^k \int_0^T\lOneNorm{v^k(t)-\tilde v^k(t)}\,dt - \epsilon.
\end{equation}
Let $\bar\rho$ be a cluster point of the sequence $\left(\frac{\rho^k}{|\rho^k|}\right)_k$ and $(\rho^l)_l$ be a subsequence for which 
$\left(\frac{\rho^k}{|\rho^k|}\right)_k$ converges to $\bar\rho$. Then, dividing the inequality \eqref{eq:forconvproof} by $|\rho^l|$ yields
\begin{equation*}
 \frac{\Psi(u,v)}{|\rho_l|} + \frac{\rho^k}{|\rho_l|} \int_0^T\lOneNorm{v(t)-\tilde v^k(t)}\,dt \geq \frac{\Psi(u^k,v^k)}{|\rho_l|} + \frac{\rho^k}{|\rho_l|} \int_0^T\lOneNorm{v^k(t)-\tilde v^k(t)}\,dt - \frac{\epsilon}{|\rho_l|}.
\end{equation*}
Taking the limit $l \to \infty$ yields
\begin{equation*}
 \bar\rho \int_0^T \lOneNorm{v-\tilde v^*}\,dt \geq \bar\rho \int_0^T\lOneNorm{v^*-\tilde v^*}\,dt.
\end{equation*}
An analog inequality holds for any feasible $(u^k,v^k,\tilde v)$.
\end{proof}

\begin{corollary}~\label{cor:main} Let $\rho^k \upto \infty$ and let $(u^k,v^k,\tilde v^k)_k$ be a sequence generated by
Algorithm~\ref{alg:penalty-eps-adm} with $(u^k,v^k,\tilde v^k) \to (u^*,v^*,\tilde v^*)$ and let $(u^*,v^*,\tilde v^*)$ be
feasible for \eqref{eq:abstract-problem-split-v-red}. Then $(u^*,v^*)$ is p-minimal for \eqref{eq:abstract-problem-reduced}.
\end{corollary}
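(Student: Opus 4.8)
The plan is to unfold the definition of p-minimality and, using the feasibility hypothesis, reduce the claim to a single optimality statement in the continuous control $u$. By definition, $(u^*,v^*)$ is p-minimal for \eqref{eq:abstract-problem-reduced} precisely when $([u^*,v^*],v^*)$ is p-optimal for the split problem \eqref{eq:abstract-problem-split-v-red}, that is, when \eqref{eq:O1} and \eqref{eq:O2} hold with $\varepsilon=0$ for the objective $\Psi(u,v)$ over the feasible set of \eqref{eq:abstract-problem-split-v-red}. Here the first ADM block is $(u,v)$ and the second is $\tilde v$, coupled by the \emph{hard} constraint $v=\tilde v$. Fixing the block $(u,v)=(u^*,v^*)$ then forces $\tilde v=v^*$, so \eqref{eq:O2} holds trivially; fixing the block $\tilde v=v^*$ forces $v=v^*$, so \eqref{eq:O1} collapses to
\[
\Psi(u^*,v^*)\le\Psi(u,v^*)\quad\text{for all }u\in\Sigma.
\]
Since the feasibility hypothesis supplies $u^*\in\Sigma$ and $v^*=\tilde v^*\in\Gamma$ (equivalently $\chi(v^*,\tilde v^*)=0$, the global minimum value of $\chi$ sharpening Theorem~\ref{thm:convergence}), establishing this last inequality is all that remains.

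To obtain it I would return to the ADM inequality \eqref{eq:forconvproof}, which every outer iterate satisfies because $(u^k,v^k,\tilde v^k)$ is a partial $\varepsilon$-optimum of the penalized problem \eqref{eq:abstract-problem-split-v-penalty-red} for $\rho^k$:
\[
\Psi(u,v)+\rho^k\int_0^T\lOneNorm{v-\tilde v^k}\,dt\ge\Psi(u^k,v^k)+\rho^k\int_0^T\lOneNorm{v^k-\tilde v^k}\,dt-\varepsilon
\]
for all $u\in\Sigma$ and $v\in\Vt$. The key step is to test this with the admissible choice $v=\tilde v^k$ and arbitrary $u\in\Sigma$: the penalty term on the left vanishes, while the nonnegative penalty term on the right may be dropped, leaving
\[
\Psi(u,\tilde v^k)\ge\Psi(u^k,v^k)-\varepsilon
\]
uniformly in $k$, and crucially independent of the growing $\rho^k$ (in contrast to Theorem~\ref{thm:convergence}, where one divides by $\rho^k$, here a single judicious test point removes the penalty outright).

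Finally I would pass to the limit $k\to\infty$. Using $(u^k,v^k)\to(u^*,v^*)$ and $\tilde v^k\to\tilde v^*=v^*$ together with continuity of $\Psi$, this yields $\Psi(u,v^*)\ge\Psi(u^*,v^*)-\varepsilon$ for every $u\in\Sigma$, which is exactly the reduced condition \eqref{eq:O1}, and delivers the genuine p-minimum with $\varepsilon=0$ once the two subproblems are solved exactly. I expect the limit passage to be the main obstacle: I must show that $\Psi(u,v)=\Phi(y(T;u,v))$ is continuous in the topology in which the iterates converge, and in particular continuous in the $v$-argument as the integer controls $\tilde v^k$ approach $v^*$. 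This is precisely where Assumption~\ref{ass:main} enters: the growth bound~ii) and Lipschitz bound~iii) on $f$ yield, via a Gr\"onwall estimate on the mild solution \eqref{eq:soldef}, continuous dependence of $y(T;\cdot)$ on $(u,v)$, and the local Lipschitz continuity of $\Phi$ transfers this to $\Psi$. Since the feasibility measure $\chi$ controls the $L^1$-distance between $v$ and $\tilde v$, the relevant mode of convergence is in $L^1$, and the uniform finiteness property keeps the piecewise-constant structure of the integer controls under control. Assembling the trivial condition \eqref{eq:O2}, the reduced condition \eqref{eq:O1}, and the assumed feasibility then gives the p-minimality of $(u^*,v^*)$.
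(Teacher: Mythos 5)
Your proposal is correct in its essential logic but takes a genuinely different, and more self-contained, route than the paper. The paper's proof of Corollary~\ref{cor:main} is a two-line reduction: it invokes Theorem~\ref{thm:convergence} (the limit is a p-minimum of the feasibility measure $\chi$) and observes that feasibility for \eqref{eq:abstract-problem-split-v-red} forces $v^*=\tilde v^*$. You instead bypass Theorem~\ref{thm:convergence} entirely: you unfold the definition of p-minimality, note that the hard coupling $v=\tilde v$ makes condition \eqref{eq:O2} vacuous and collapses \eqref{eq:O1} to the single inequality $\Psi(u^*,v^*)\le\Psi(u,v^*)$ for $u\in\Sigma$, and then extract that inequality directly from the ADM inequality \eqref{eq:forconvproof} by the test point $v=\tilde v^k$, which annihilates the penalty on the left and lets you drop the nonnegative penalty on the right. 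This is arguably the more transparent argument, since Theorem~\ref{thm:convergence} as stated only concerns $\chi$ and does not by itself say anything about optimality of $\Psi$ in the $u$-direction; your version makes explicit where that optimality actually comes from, and also makes explicit the continuity of $\Psi$ under the relevant ($L^1$-type) convergence of the iterates, which both proofs need for the limit passage and which the paper leaves implicit.

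The one point you should confront squarely is the $\varepsilon$. Your final inequality is $\Psi(u,v^*)\ge\Psi(u^*,v^*)-\varepsilon$, i.e.\ p-$\varepsilon$-minimality, and the fixed tolerance $\varepsilon>0$ from the inexact subproblem solves survives the limit $k\to\infty$ because, unlike in Theorem~\ref{thm:convergence}, you do not divide by $\rho^k$. Your parenthetical ``once the two subproblems are solved exactly'' does not match Algorithm~\ref{alg:penalty-eps-adm}, which only computes $\tfrac{\varepsilon}{2}$-optimal solutions. So as written your argument proves the corollary with ``p-minimal'' replaced by ``p-$\varepsilon$-minimal''; to claim $\varepsilon=0$ you would need an additional device (e.g.\ driving the inner tolerance to zero along the outer iterations). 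This is a real discrepancy with the stated conclusion, though it is fair to note that the paper's own citation of Theorem~\ref{thm:convergence} does not visibly close this gap either, since the division by $\rho^k$ that removes $\varepsilon$ there simultaneously removes all information about $\Psi$.
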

\begin{proof}
This follows from Theorem~\ref{thm:convergence} and using that feasibility of $(u^*,v^*,\tilde v^*)$ for \eqref{eq:abstract-problem-split-v-red} 
implies $v^*=\tilde v^*$.
\end{proof}

Note that in the inner loop of Algorithm~\ref{alg:penalty-eps-adm}, we compute p-$\varepsilon$-minima, but that Corollary~\ref{cor:main} says
that a feasible limit of a converging sequence generated by ADM-SUR is a p-$\varepsilon$-minimum with $\varepsilon=0$. Moreover, note that
the two subproblems for \eqref{eq:abstract-problem-split-v-penalty-red} in Algorithm~\ref{alg:penalty-eps-adm} can be solved efficiently.
Finally, we note that $\rho^k \upto \infty$ is needed in Theorem~\ref{thm:convergence} and Corollary~\ref{cor:main}, 
because Theorem~\ref{thm:exactness} guarantees exactness of the penalty only in a global minimum. It must be observed that 
even in the finite-dimensional case, there are only slightly stronger results known (see \cite{geissler.ea:fpump-adm}, Theorems~8 and~11).
It is instructive to note that in the finite-dimensional case the assumption of convexity immediately yields convergence
to global optima and the assumption of differentiabilty to convergence to local optima. To us, this indicates that the mixed-integer part of the problem makes it difficult to prove anything about convergence to local (or even global) optima for these types of methods.

For any fixed $\tilde{v}$ the problem \eqref{eq:abstract-problem-split-v-penalty-red} is equivalent to the problem
\begin{subequations}
  \label{eq:POC1}
\begin{alignat}{2}
 \min_{y,z,u,w}\quad &\Phi(y(T)) + z(T)\label{eq:cost3}\\
 &\dot{y}(t)=Ay(t)+\sum_{i=1}^{\tilde M} w_i(t) f(y(t),u(t),r^i),\quad &&~t \in (0,T),~y(0)=y_0\label{eq:StateEq3}\\
 &\dot{z}(t)=\sum_{i=1}^{\tilde{M}} w_i(t) \rho |r^i-\tilde v|_{l_1},\quad &&t \in (0,T),~z(0)=0\label{eq:NewStateEq3}\\
 &u \in \Sigma \subset \Ut, \label{eq:CCC3}\\
 &\sum_{i=1}^{\tilde M} w_i(t)=1,~t\in(0,T)~\text{a.e.} \label{eq:SOS1}\\
 &y \in \Yt,~z \in \Zt,~w \in \Wt\label{eq:spaces3},
\end{alignat}
\end{subequations}
with $r_i$, $i=1,\ldots,\tilde{M}=2^M$, enumerating the configurations $\{0,1\}^M$, $\Zt=L^1(0,T)$ and $\Wt=L^\infty(0,T;\{0,1\}^{\tilde M})$. Letting
$(\bar y,\bar z,\bar u, \bar w)$ be a solution of \eqref{eq:POC1} with the relaxation $w \in L^\infty(0,T;[0,1]^{\tilde M})$ and $w^n\in \Wt$ be a sequence
generated by the sum-up rounding algorithm of \cite{Sager2006,SagerBockDiehl2012} and $y_n$, $z_n$ be the corresponding solutions of \eqref{eq:StateEq3} and
\eqref{eq:NewStateEq3} with $w=w^n$, then under Assumption~\ref{ass:main}
\begin{equation}\label{eq:POCconvergence}
 \|\bar y-y_n\|_{C([0,T];Y)}+\|\bar z-z_n\|_{C([0,T])} \to 0,\quad\text{for}~n\to\infty,
\end{equation}
see \cite{MannsKirches2019} for details. Under additional assumptions on $A$ and $f$, even error estimates are available \cite{SagerBockDiehl2012,HanteSager2013,Hante2017}.
In particular, \eqref{eq:POCconvergence} shows that this solution approach yields an $\frac{\epsilon}{2}$-optimal solution for a sufficiently fine control grid.
We refer to this solution approach for subproblem \eqref{eq:POC1} as the (POC)-step.

Further, for any fixed $(u,v)$ the problem \eqref{eq:abstract-problem-split-v-penalty-red} reduces to
\begin{alignat}{2}
  \label{eq:MIP}
 \min_{\tilde{v} \in \Gamma}\quad &\int_0^T |v-\tilde{v}|_{l_1}\,dt.
\end{alignat}
Here, standard quadrature rules and mixed-integer linear programming techniques can be used to compute an $\frac{\varepsilon}2$-optimal solution again for a sufficiently fine
control grid. We refer to this solution approach for the subproblem \eqref{eq:MIP} as the (MIP) step.

The sum-up rounding algorithm in the (POC)-step can be interpreted as the solution of the following combinatorial integral approximation problem (CIAP)

\begin{alignat}{2}
\min_{w^n} \max_{t \in (0,T)} \quad & \left\lVert \int_0^t w(s) - w^n(s) ds \right\rVert_{\infty} \label{eq:costSURCIAP},
\end{alignat}
for a piecewise constant function $w^n$ on a fixed grid, see \cite{SagerJungKirches2011}. We therefore refer to the penalty-$\varepsilon$ method in Algorithm~\ref{alg:penalty-eps-adm} as ADM-SUR. 
An interesting variant of this Algorithm is to skip SUR in the POC-step, \ie doing the step in the relaxed direction of $w$ and using the MIP-step to recover integer feasibility. We refer
to this variant as ADM (without SUR). For comparison, we also consider the heuristic to apply POC to the original problem formulation without combinatorial constraints and to recover
a feasible solution via the following mixed-integer problem
\begin{subequations}
  \label{eq:MIPheuristic}
\begin{alignat}{2}
\min_{w^n} \max_{t \in (0,T)} \quad & \left\lVert \int_0^t w(s) - w^n(s) ds \right\rVert_{\infty}\label{eq:CIAPheuristic}\\
 &w^n \in \Gamma \label{eq:CCC5},
\end{alignat}
\end{subequations}
again on a fixed grid for $w^n$, see \cite{SagerJungKirches2011,HanteSager2013}. We refer to this approach as CIAP. Note that in contrast to \eqref{eq:MIP}, the cost function in \eqref{eq:MIPheuristic}
is not a norm on $\Wt$. A convergence result for this subproblem in an ADM framework such as for the ADM-SUR algorithm is therefore an open problem.


\section{Numerical study}\label{sec:numstudy}

We test the proposed methods on two benchmark examples from the
\href{https://mintoc.de}{mintOC.de} library \cite{sager2012benchmark}, which we
augment by minimum dwell-time constraints in order to prohibit infinitely many
switching events.
To model the dwell time condition we used the basic MIP constraints.
These do not, however, form a complete description of the so-called min up/down polyhedron.
One can use either use a complete description in the original variable space~\cite{lee2004min}, where
additional constraints are separated with cutting planes or use an extended formulation~\cite{rajan2005minimum} instead.
As the main focus of this article is the solution quality, we stick to the basic
formulation above.

Our computations are based on CasADi~\cite{Andersson2019} for the model
equations and their derivatives and the solvers Ipopt~\cite{WaechterBiegler2006}
for nonlinear programming problems and Gurobi~\cite{gurobi} for quadratic and
linear mixed-integer programs.

The ADM method was used with $\rho = 10^{-3}, 10^{-2}, \dotsc, 10^6$ and $\varepsilon
= 10^{-3}$.
We terminated the method when the value of the penalty term dropped below a tolerance of $10^{-4}$, because increasing $\rho$ beyond that point will not change the iterates much on a fixed discretization.
At the end of this section, we study the dependency of the penalty parameter adaptation for smaller choices of the multiplicative increment.
Our study confirms the experience from the finite-dimensional case (the version described in \cite{geissler.ea:fpump-adm}) that the effect of the penalty adaptation strategy does not change the qualitiative behavior of the method. To our point of view, several different strategies can be used to identify $p$-minima with low objectives for example within a global search strategy like branch-and-bound. An interesting direction for further research seems to be the use of weighted $1$-norm-penalties with adaption strategies for the weights as used in the finite-dimensional case (see, for instance, \cite{geissler.ea:powernova,geissler.ea:compressor-adm}). This is not straight-forward in the infinite-dimensional case, because that would make the strategy discretization-dependent. 

\subsection{Fuller's problem}

\begin{figure}[tb]
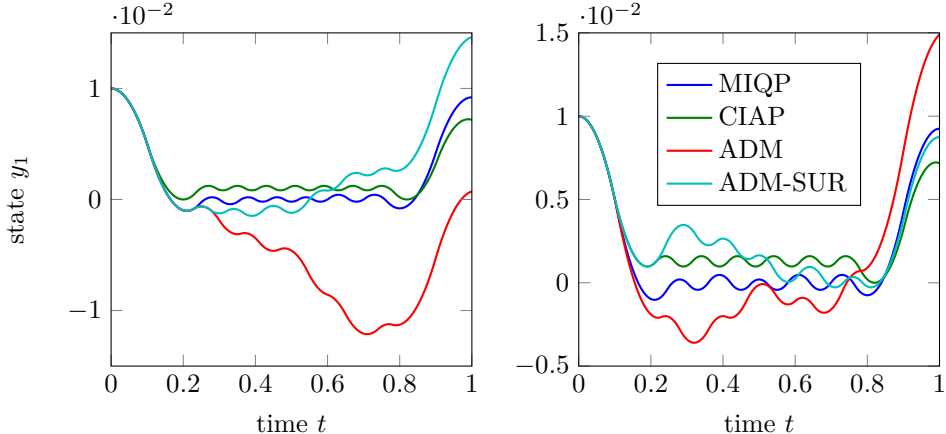

  \centering
  \input{fuller_states_tau=0.04.tex}
  \input{fuller_states_tau=0.05.tex}
  \caption{
    Results for Fuller's problem with minimum dwell-times $\tau_\mathrm{min} =
    0.04$ (left) and $\tau_\mathrm{min} = 0.05$ (right).
    The heuristic results based on ADM and CIAP can in a qualitative sense get close to the global solution computed with an MIQP solver.}
  \label{fig:fuller_states}
\end{figure}

For our numerical study, we consider a variant of Fuller's problem augmented with minimum dwell-time constraints
\begin{subequations}
  \label{eqn:fuller}
  \begin{alignat}{2}
    \min_{y,v} & \int_{0}^{1} y_1(t)^2 dt + \left(y_1(1) - \tfrac{1}{100}\right)^2 + y_2(1)^2\label{eq:fuller-objective}\\
    \text{s.t.~} & \dot{y}_1(t) = y_2(t),\quad t \in (0,1)\label{eq:fuller-ode-1}\\
    & \dot{y}_2(t) = 1 - 2 v(t),\quad t \in (0,1)\label{eq:fuller-ode-2}\\
    & y(0) = \left(\tfrac{1}{100}, 0\right)^\top,\\
    & v(t) \in \{0, 1\},\quad t \in (0,1)\\
    & v(t+\tau_{\min})-v(t)+|v|_{(t,t+\tau_{\min})} \leq 2,\quad t \in (0,1-\tau_{\min}).
  \end{alignat}
\end{subequations}
The problem is notoriously difficult, because the solution of the problem without dwell-time constraints 
(i.e., for $\tau_{\min}=0$) exhibits chattering \cite{ZelikinBorisov1994}.

We compare our proposed ADM-based method (with and without SUR) with a direct
global Mixed-Integer Quadratic Programming (MIQP) method and CIAP.
We discretize Equations~\eqref{eq:fuller-ode-1} and \eqref{eq:fuller-ode-2} using a Gauss--Legendre collocation of degree 4 on an equidistant partition of $[0,1]$ with 200 collocation intervals. The same collocation nodes are also used for approximating the integral term in~\eqref{eq:fuller-objective} via Gauss--Legendre quadrature. The control discretization is piecewise constant, with jumps allowed only at the boundary of the collocation intervals but not at the collocation nodes. Because the objective~\eqref{eq:fuller-objective} is quadratic in $y$ and the constraints~\eqref{eq:fuller-ode-1} and~\eqref{eq:fuller-ode-2} are linear in $y$, the same holds for their discretized counterparts. Hence, we obtain a discretized
MIQP, which we solve, where possible, to global optimality using Gurobi.
The resulting objective values and
corresponding single CPU runtimes are depicted in
Tables~\ref{tab:objective_values} and~\ref{tab:runtimes}. It appears that the
ADM-based methods have some advantage both in quality and runtime over CIAP for the
instances with larger $\tau_\mathrm{min}$, which are harder for POC-based heuristics (but appear to be simpler for the MIQP approach).
Exemplary for two selected values dwell-times $\tau_{\min}$, the resulting state $y_1$ in problem~\ref{eqn:fuller}
is shown in Figure~\ref{fig:fuller_states}.

\begin{table}
    \caption{Comparison of the objective function values for the four approaches
  on a Gauss--Legendre collocation discretization of degree 4 on an equidistant
  grid with 200 intervals for Fuller's problem \eqref{eqn:fuller} with respect
  to varying values of the minimum dwell time $\tau_\mathrm{min}$. The best
  objective value among the heuristic approaches is highlighted in boldface.}
  \label{tab:objective_values}
  \centering
  \begin{tabular}{ccccc}
    \toprule
    $\tau_\mathrm{min}$ & MIQP & CIAP & ADM & ADM-SUR\\
    \midrule
    0.01 & {  0.014508} & \textbf{  0.014870} & {  0.016653} & {  0.498363} \\
    0.02 & {  0.014511} & \textbf{  0.130346} & {  0.493694} & {  0.432311} \\
    0.03 & {  0.014517} & \textbf{  0.116714} & {  1.182971} & {  0.467442} \\
    0.04 & {  0.014530} & \textbf{  0.120164} & {  0.234605} & {  0.148813} \\
    0.05 & {  0.014558} & {  0.120706} & {  0.450784} & \textbf{  0.016739} \\
    0.06 & {  0.014649} & {  0.116457} & {  0.831939} & \textbf{  0.015566} \\
    0.07 & {  0.014666} & {  0.954087} & { 10.489119} & \textbf{  0.540208} \\
    0.08 & {  0.015027} & {  0.426618} & { 18.972511} & \textbf{  0.039570} \\
    0.09 & {  0.015027} & {  0.137513} & {  0.157761} & \textbf{  0.017543} \\
    0.10 & {  0.015173} & {  0.209153} & \textbf{  0.149268} & {  1.090531} \\
    \bottomrule
  \end{tabular}
\end{table}

\begin{table}
    \caption{Single CPU runtimes in seconds on an Intel(R) Core(TM) i7-5820K CPU @
    3.30GHz for the corresponding results in Table~\ref{tab:objective_values}.
    The remaining MIP gap achieved by Gurobi at a timeout of 1 hour is given in
    parantheses. Even though the codes for the heuristics CIAP, ADM, and
    ADM-SUR have not been heavily optimized, the runtimes are much smaller
  than for the MIQP solver.}
  \label{tab:runtimes}
  \centering
  \begin{tabular}{ccccc}
    \toprule
    $\tau_\mathrm{min}$ & MIQP & CIAP & ADM & ADM-SUR\\
    \midrule
    0.01 & 3600.00 (0.021\% MIP gap) & 5.49 & \textbf{1.47} & 2.53 \\
    0.02 & 3600.00 (0.031\% MIP gap) & 8.33 & \textbf{2.22} & 2.64 \\
    0.03 & 2720.16  & 15.70 & \textbf{2.35} & 3.55 \\
    0.04 & 481.51  & 10.92 & \textbf{2.85} & 3.91 \\
    0.05 & 174.60  & 12.32 & \textbf{2.71} & 4.08 \\
    0.06 & 149.19  & 11.56 & 2.96 & \textbf{2.78} \\
    0.07 & 70.82  & 13.33 & \textbf{2.74} & 3.73 \\
    0.08 & 94.71  & 9.80 & \textbf{2.90} & 3.31 \\
    0.09 & 45.43  & 14.84 & \textbf{2.36} & 5.44 \\
    0.10 & 45.03  & 11.34 & \textbf{2.59} & 6.24 \\
    \bottomrule
  \end{tabular}
\end{table}

\subsection{Network of transmission lines}

\begin{figure}[tb]
  \centering
  \begin{tikzpicture}
    \tikzset{v/.style={shape=circle,fill=black,minimum width=3pt,inner sep=0pt}}
    \tikzset{e/.style={thick,->,>=latex'}}
    \node[v,label=left:Producer $u_1(t)$] (generator1) at (0,2.5) {};
    \node[v,label=left:Producer $u_2(t)$] (generator2) at (0,0.5) {};
    \node[v,label=right:Consumer 1] (consumer1) at (5,0) {};
    \node[v,label=right:Consumer 2] (consumer2) at (5,0.5) {};
    \node[v,label=right:Consumer 3] (consumer3) at (5,2.5) {};
    \node[v,label=right:Consumer 4] (consumer4) at (5,1.8) {};
    \node[v,label=right:Consumer 5] (consumer5) at (5,1.2) {};
    \node[v] (top1) at (1,2.5) {};
    \node[v] (sub1) at (1,1.7) {};
    \node[v] (sub2) at (2,1.5) {};
    \node[v] (sub3) at (1,1.3) {};
    \node[v] (mid1) at (4,1.5) {};
    \node[v] (bottom1) at (1,0.5) {};
    \node[v] (bottom2) at (4,0.5) {};

    \draw[e] (generator1) -- (top1);
    \draw[e] (generator2) -- (bottom1);

    \draw[e] (top1) -- (consumer3);
    \draw[e,dashed] (top1) -- node[right] {$v_1(t)$} (sub1);
    \draw[e,dashed] (bottom1) -- node[right] {$v_1(t)$} (sub3);
    \draw[e] (bottom1) -- (consumer1);

    \draw[e] (sub1) -- (sub2);
    \draw[e] (sub3) -- (sub2);
    \draw[e] (bottom1) -- (bottom2);

    \draw[e,dashed] (sub2) -- node[above] {$v_1(t)$} (mid1);

    \draw[e,dotted] (bottom2) -- node[left] {$v_2(t)$} (mid1);

    \draw[e] (mid1) -- (consumer4);
    \draw[e] (mid1) -- (consumer5);
    \draw[e] (bottom2) -- (consumer2);
  \end{tikzpicture}
  \caption{Network topology for the subgrid scenario of the transmission lines
    example. The objective is to continuously control the power generation at
    the producers via $u_1(t)$ and $u_2(t)$ and to switch on or off the dashed
    connections (via $v_1(t)$) and the dotted connection (via $v_2(t)$) in order
    to minimize the quadratic deviation of the power supply from the power
    demand at the five consumer nodes.}
  \label{fig:subgrid}
\end{figure}
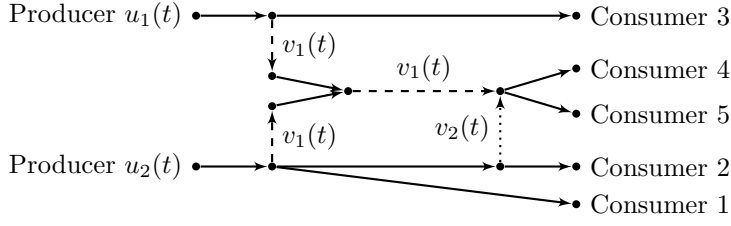

\begin{table}[!htb]
    \caption{Scaled objective values for different approaches to the transmission
  lines example. POC and SUR violate the minimum dwell-time constraints. The
  ADM-based heuristics deliver the best objective values on the subgrid
  scenario. For the extended tree scenario, cf.~\cite{GoettlichPotschkaTeuber2019}, all heuristics perform equally
  well.}
  \label{tab:translines_obj_vals}
  \centering
  \begin{tabular}{ccccccc}
    \toprule
    Scenario & POC & SUR & CIAP & ADM & ADM-SUR\\
    \midrule
    Subgrid & 1.538 & 5.483 & 5.256 & 3.719 & 3.370 \\
    Extended tree & 2.775 & 3.103 & 3.096 & 3.081 & 3.078 \\
    \bottomrule
  \end{tabular}
\end{table}

This problem was described in \cite{GoettlichPotschkaTeuber2019}.
The telegraph equations are based on a $2\times2$ hyperbolic system of partial differential equations
and describe the voltage and current on electrical transmission lines in time $t\in[0,T]$ and space $x\in[0,l]$.
The state variable $\boldsymbol{\xi}(x,t)=(\xi^+(x,t),\xi^-(x,t))$ represents right or left-traveling components on
{\em each} line of the network and is governed by 
\begin{equation}
\label{arc}
\partial_t \boldsymbol{\xi} + \boldsymbol{\Lambda} \partial_x \boldsymbol{\xi} + \mathbf{B} \boldsymbol{\xi} = 0,
\end{equation}
where $\boldsymbol{\Lambda}$ is a diagonal matrix including the speed of propagation in each direction and $\mathbf{B}$ denotes 
a symmetric matrix with non-negative entries.
The dynamics on the lines are coupled at nodes via the boundary condition 
\begin{equation}
\label{coupling}
\begin{pmatrix}
\boldsymbol{\Lambda}^+ & 0\\
0 & \mathbf{D}^-(\boldsymbol{v(t)})
\end{pmatrix} \boldsymbol{\xi}(0,t) = 
\begin{pmatrix}
\mathbf{D}^+(\boldsymbol{v(t)}) & 0\\
0 & \boldsymbol{\Lambda}^-
\end{pmatrix} \boldsymbol{\xi}(l,t) + 
\begin{pmatrix}
\boldsymbol{\Lambda}^+ & 0\\
0 & 0
\end{pmatrix} \boldsymbol{u}(t).
\end{equation}
The distribution matrices $\mathbf{D}^{\pm}(\boldsymbol{v})$ depend on binary-valued controls
$\boldsymbol{v}(t)\in\{0,1\}$, which are used to switch
off specified connections in the network while
the continuous-valued controls $\boldsymbol{u}(t)$ denote
the power generation at the producer nodes in the network, cf. Figure \ref{fig:subgrid}. 
The goal is to minimize the quadratic deviation of the accumulated power delivery $C_s(t,\boldsymbol{\xi})=\sum_{r \in \delta_S} \xi^+_r(l_r,t)$ 
(with $\delta_S$ being the set of all lines adjacent to node $s$) from the demand $Q_s(t)$ at the
consumer nodes $V_S$, i.e.,
\begin{equation}
\label{optprob}
\begin{cases}
		\quad \min\limits_{\boldsymbol{v,u}} \frac{1}{2}\sum\limits_{s \in V_S} \int_0^{T} \left( Q_s(t) - C_s(t,\boldsymbol{\xi}) \right)^2 dt\\
\\
		\quad \text{s.t. }	 (\ref{arc}) \text{ and }  (\ref{coupling}).
		\end{cases}
\end{equation}
This problem can be written in abstract form as
\begin{equation}\label{eq:telegraph_abstract}
 \dot{y}=Ay+B(v)u,
\end{equation}
with $A$ and $B(v_i)$, $i=1,\ldots,M$ being unbounded linear operators on Hilbert spaces using abstract semigroup theory \cite{Bartecki2015}.
Though \eqref{eq:telegraph_abstract} is not of the form \eqref{eq:abstract-problem}, the solution is still
given by the variation of constants formula \eqref{eq:soldef} with $f(y,u,v)=B(v)u$, see \eg \cite{CurtainZwart1995}.

\begin{figure}[tb]
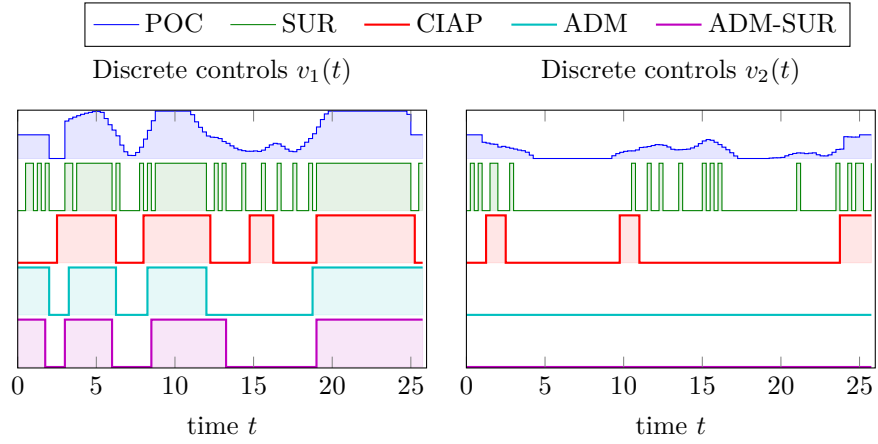

    \centering
    \begin{tikzpicture}
  \definecolor{color0}{rgb}{0,0.75,0.75}
  \definecolor{color1}{rgb}{0.75,0,0.75}
  \begin{axis}[
    width=\textwidth,
    height=2cm,
    hide axis,
    legend cell align={left},
    legend entries={{POC},{SUR},{CIAP},{ADM}, {ADM-SUR}},
    legend columns=6,
    legend style={/tikz/every even column/.append style={column sep=8pt}},
    xmin=0, xmax=26,
    ymin=-0.02, ymax=5.42
  ]
    \addlegendimage{no markers, blue}
    \addlegendimage{no markers, green!50.0!black}
    \addlegendimage{thick, no markers, red}
    \addlegendimage{thick, no markers, color0}
    \addlegendimage{thick, no markers, color1}
  \end{axis}
\end{tikzpicture}
    \input{translines_discrete_control_1}
    \input{translines_discrete_control_2}
    \caption{The resulting binary-valued controls in the transmission lines subgrid scenario
    for different solution approaches: The partially outer convexified
    relaxed solution (POC) delivers a lower bound, but is not binary feasible.
    The application of Sum-Up Rounding (SUR) yields binary feasible controls,
    which oscillate heavily and do not satisfy the minimum dwell-time
    constraint, however. The ADM-based heuristics result in fewer switches than
    the heuristic based on solving a Combinatorial Integral Approximation
    Problem with minimum dwell-time constraints (CIAP).}
    \label{fig:translines_discrete_controls}
\end{figure}
\begin{figure}[tb]
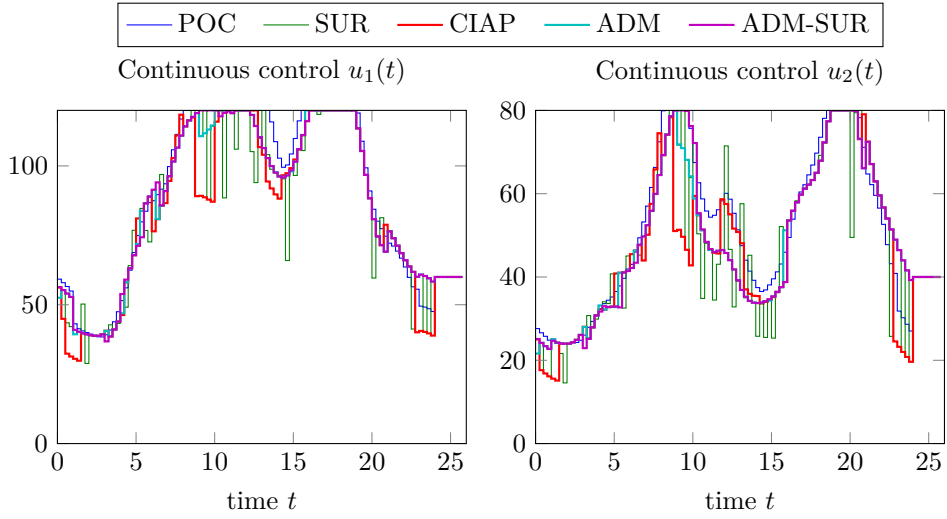

    \centering
    \begin{tikzpicture}
  \definecolor{color0}{rgb}{0,0.75,0.75}
  \definecolor{color1}{rgb}{0.75,0,0.75}
  \begin{axis}[
    width=\textwidth,
    height=2cm,
    hide axis,
    legend cell align={left},
    legend entries={{POC},{SUR},{CIAP},{ADM}, {ADM-SUR}},
    legend columns=6,
    legend style={/tikz/every even column/.append style={column sep=8pt}},
    xmin=0, xmax=26,
    ymin=-0.02, ymax=5.42
  ]
    \addlegendimage{no markers, blue}
    \addlegendimage{no markers, green!50.0!black}
    \addlegendimage{thick, no markers, red}
    \addlegendimage{thick, no markers, color0}
    \addlegendimage{thick, no markers, color1}
  \end{axis}
\end{tikzpicture}
    \input{translines_continuous_control_1}
    \input{translines_continuous_control_2}
    \caption{The continuous-valued controls oscillate on a similar scale as the
      binary-valued controls in Figure~\ref{fig:translines_discrete_controls}. The
      ADM-based results exhibit much smaller jumps.}
    \label{fig:translines_continuous_controls}
\end{figure}

For the computational experiments, we use the publicly available\footnote{See
\url{https://github.com/apotschka/poc-transmission-lines}.}
Python implementation, which uses a classical upwinding Finite Volume discretization with 4 equidistant volumes per line with forward Euler timestepping with 104 equidistant time steps as in~\cite{GoettlichPotschkaTeuber2019}.
The minimum dwell-time constraints are set to $\tau_\mathrm{min} = 1$.


The Figures~\ref{fig:translines_discrete_controls}--\ref{fig:translines_demand_and_delivery}
illustrate the results for a scenario, in which a small subgrid of the network
can be islanded, see Figure~\ref{fig:subgrid}. We observe that the binary
decisions can be partly equalized by reactions in the power generation at the
producer nodes.

\begin{figure}[tb]
    \centering
    \begin{tikzpicture}
  \definecolor{color0}{rgb}{0,0.75,0.75}
  \definecolor{color1}{rgb}{0.75,0,0.75}
  \begin{axis}[
    width=\textwidth,
    height=2cm,
    hide axis,
    legend cell align={left},
    legend entries={{POC},{SUR},{CIAP},{ADM}, {ADM-SUR}},
    legend columns=6,
    legend style={/tikz/every even column/.append style={column sep=8pt}},
    xmin=0, xmax=26,
    ymin=-0.02, ymax=5.42
  ]
    \addlegendimage{no markers, blue}
    \addlegendimage{no markers, green!50.0!black}
    \addlegendimage{thick, no markers, red}
    \addlegendimage{thick, no markers, color0}
    \addlegendimage{thick, no markers, color1}
  \end{axis}
\end{tikzpicture}
    \input{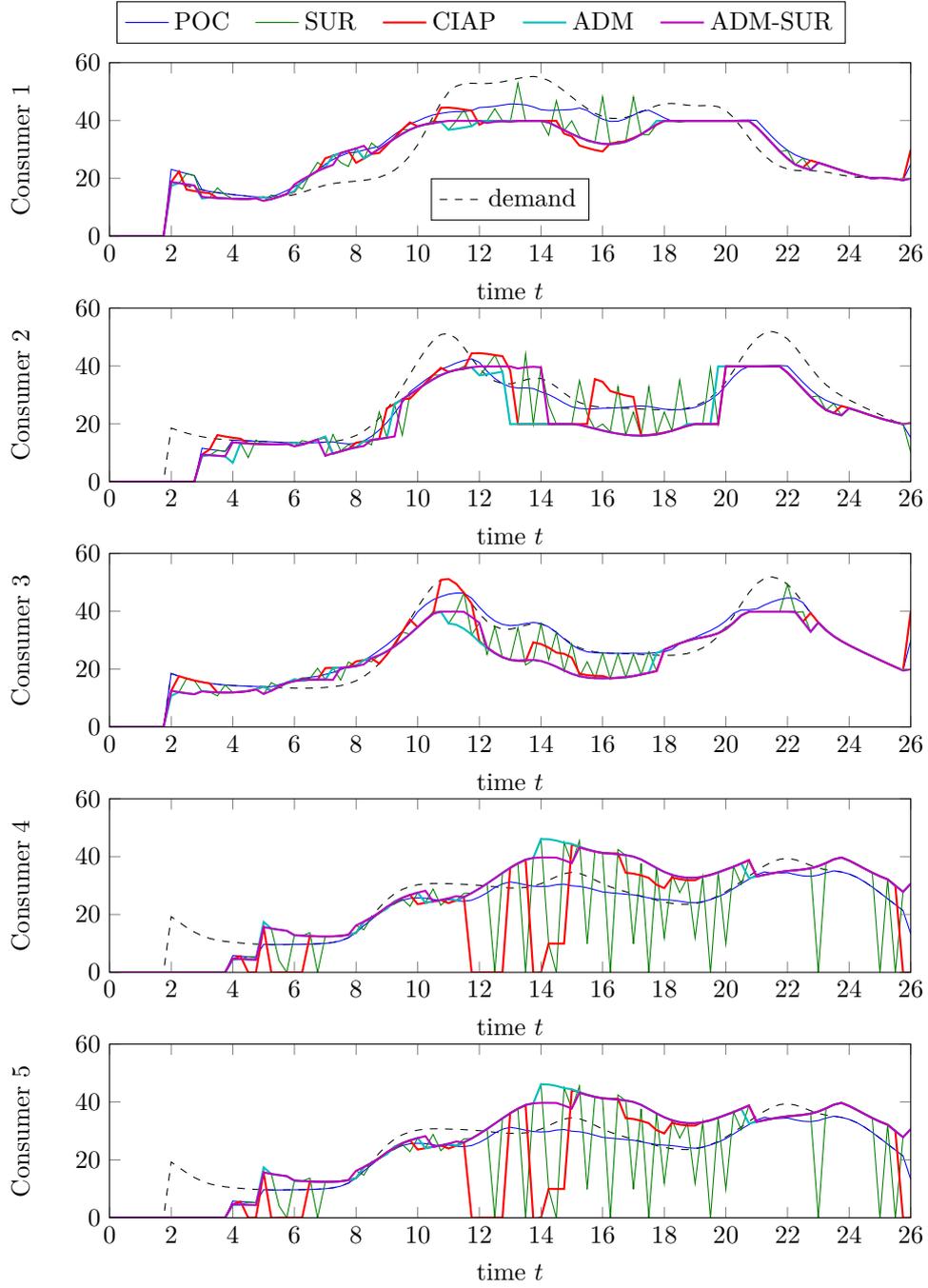}
    \caption{Resulting power supply for the controls from Figure~\ref{fig:translines_discrete_controls}
    and Figure~\ref{fig:translines_continuous_controls}. Due to the additional minimum dwell-time constraints, the deviation
    of power delivery from the demand at consumer nodes is raised in comparison
    to the lower bound given by POC. The ADM-based heuristical results are
    superior to both SUR (which does not satisfy the minimum dwell-time
    constraint) and CIAP.}
    \label{fig:translines_demand_and_delivery}
\end{figure}

Finally, we present a numerical study of the influence of the penalty parameter adaptation on the resulting objective function in Table~\ref{tab:rho_influence}. To this end, we use a coarser discretization of the subgrid szenario (2 equidistant finite volumes per transmission line, 52 equidistant time steps) and increase the penalty parameter in multiplicative steps of $\sqrt[i]{10}$ for varying $i \in \{ 1, 2, 4, 8 \},$ starting from $\rho = 10^{-3}$. The outer loop is terminated when the value of the penalty term drops below $10^{-4}$. We observe that the ADM without CIAP is largely unaffected by the penalty adaptation choice, while the ADM with CIAP shows a more pronounced dependence.

\begin{table}[tbp]
  \caption{The final value $\rho^\ast$ and the resulting objective value $\Phi^\ast$ for the ADM without CIAP are only marginally influenced by the choice of increment factor in the adaptation of the penalty parameter $\rho$. The dependence for the ADM with CIAP is more pronounced.}
  \label{tab:rho_influence}
  \centering
  \begin{tabular}{ccccc}
    \toprule
    $\rho$ incr.~factor & $\rho^\ast$ (w/o CIAP) & $\rho^\ast$ (CIAP) & $\Phi^\ast$ (w/o CIAP) & $\Phi^\ast$ (CIAP)\\
    \midrule
    $10$ & 10 & 10
    & 3.8297629676
    & 4.0293477718
    \\
    $\sqrt{10} \approx 3.16$ & 3.16 & 3.16
    & 3.8297628767
    & 3.3676874444
    \\
    $\sqrt[4]{10} \approx 1.79$ & 3.16 & 3.16
    & 3.8297628767
    & 3.5845405543
    \\
    $\sqrt[8]{10} \approx 1.33$ & 3.16 & 2.37
    & 3.8297628767
    & 3.9129098172
    \\
    \bottomrule
  \end{tabular}
\end{table}


\clearpage
\section{Conclusion}\label{sec:conclusion}
We conclude that the proposed penalty-ADM method performs notably well for our benchmark problems within the class of 
mixed-integer optimal control problems with dwell-time constraints. The quality of the computed solutions 
outperforms the other considered heuristic solutions for large dwell-times.
We think that it is worthwhile to use this heuristic inside of exact methods
to ensure that good feasible solutions are found early on in the solution process.
Moreover, the convergence theory shows that the proposed method computes
partial minima in a lifted sense. The comparison with a global solution for a full discretization shows that these
partial minima are in general not global minima. However, we note that this is not surprising because we used a local solver for
the POC-step. The proposed methods can be extended in various directions such as considerations of state constraints, mixed-integer corrector steps
from linearizations and of course more general problem classes.

\medskip
\noindent\textbf{Acknowledgements}. The second and fourth author were supported by the Deutsche Forschungsgemeinschaft (DFG) within the Sonderforschungsbereich/Transregio~154 \emph{Mathematical Modelling, Simulation and Optimization using the Example of Gas Networks}, Projects A03 and B07.
The research  of the fourth author has been performed as part of the
Energie Campus Nürnberg and is supported by funding of the Bavarian
State Government.
The third author was supported by the German Federal Ministry for Education (BMBF) and
Research under grants MOPhaPro (05M16VHA) and MOReNet (05M18VHA) while the first author 
was supported by the BMBF under grant ENets (05M18VMA).



\end{document}